\numberwithin{equation}{section}
\theoremstyle{plain}
\newtheorem{theorem}{Theorem}[section]
\newtheorem{corollary}{Corollary}[section]
\newtheorem{definition}{Definition}[section]
\newtheorem{lemma}[theorem]{Lemma}
\newtheorem{remark}[theorem]{Remark}
\newtheorem{problem}[theorem]{Problem}
\newcommand{\beq}{\begin{equation}}
\newcommand{\eeq}{\end{equation}}
\newcommand{\beqs}{\begin{eqnarray*}}
\newcommand{\eeqs}{\end{eqnarray*}}
\newcommand{\beqn}{\begin{eqnarray}}
\newcommand{\eeqn}{\end{eqnarray}}
\newcommand{\beqa}{\begin{array}}
\newcommand{\eeqa}{\end{array}}
\def\phi{\varphi}
\begin{document}
\title[Minkowski problem]{Non-normalized solutions to the horospherical Minkowski problem}

\author{Li Chen}
\address{Faculty of Mathematics and Statistics, Hubei Key Laboratory of Applied Mathematics, Hubei University,  Wuhan 430062, P.R. China}
\email{chenli@hubu.edu.cn}

\keywords{Minkowski type problem; $h$-convex; Monge-Amp\`ere type
equation.}

\subjclass[2010]{Primary 35J96, 52A39; Secondary 53A05.}

%\thanks{This research was supported by funds from Hubei Provincial Department of Education
%Key Projects D20181003.}
%\thanks{$\ast$ Corresponding author}

\begin{abstract}
Recently, the horospherical $p$-Minkowski problem in hyperbolic space was proposed as a counterpart of
$L_p$ Minkowski problem in Euclidean space. Through designing a new volume preserving curvature flow,
the existence of normalized even solution to the horospherical $p$-Minkowski problem was solved for all $p \in \mathbb{R}$.

However, due to the lack of homogeneity of the horospherical $p$-surface area measure, it is
 difficult to remove the normalizing factor. In this paper, we overcome this difficulty
for $-n\leq p<n$ by the degree theory. In particular, our result gives the existence for solutions to
the prescribed surface area measure problem for
$h$-convex domains ($p=0$) and the prescribed
shifted Gauss curvature problem (or the $n$-th symmetric
Christoffel problem in $\mathbb{H}^{n+1}$) ($p=-n$).
\end{abstract}

\maketitle

\baselineskip18pt

\parskip3pt

\section{Introduction}

The classical Minkowski problem and its extension $L_p$ Minkowski problem \cite{Lu93, Chou-Wang06} play very important roles
in the study of convex bodies in Euclidean space (see also \cite{Sch13}). It is a dream to develop similar problems
in hyperbolic space. Recently, Andrews-Chen-Wei declared
interesting formal similarities between the geometry of
$h$-convex domains in hyperbolic space and that of
convex Euclidean bodies (see section 5 in \cite{And20}). Along the lines of Andrews-Chen-Wei, Li-Xu \cite{Li-Xu}
introduced the horospherical $p$-surface area measure by use of the hyperbolic $p$-sum which they defined
and proposed the associated horospherical $p$-Minkowski problem. We will briefly describe them below
followed by Section 5 in \cite{And20} and Section 2 in \cite{Li-Xu}.

We shall work
in the hyperboloid model of $\mathbb{H}^{n+1}$. For that, consider the
Minkowski space $\mathbb{R}^{n+1,1}$ with canonical coordinates
$X=(X^1, . . . , X^{n+1}, X^0)$ and the Lorentzian metric
\begin{eqnarray*}
\langle X, X\rangle=\sum_{i=1}^{n+1}(X^i)^2-(X^0)^2.
\end{eqnarray*}
$\mathbb{H}^{n+1}$ is the future time-like hyperboloid in Minkowski
space $\mathbb{R}^{n+1,1}$, i.e.
\begin{eqnarray*}
\mathbb{H}^{n+1}=\Big\{X=(X^1, \cdot\cdot\cdot, X^{n+1}, X^0) \in
\mathbb{R}^{n+1,1}: \langle X, X\rangle=-1, X^0>0\Big\}.
\end{eqnarray*}
The horospheres are hypersurfaces in $\mathbb{H}^{n+1}$ whose
 principal curvatures equal to $1$ everywhere. In the hyperboloid model of $\mathbb{H}^{n+1}$,
they can be parameterized by $\mathbb{S}^n \times \mathbb{R}$
\begin{eqnarray*}
H_{x}(r)=\{X \in \mathbb{H}^{n+1}: \langle X, (x, 1)\rangle=-e^r\},
\end{eqnarray*}
where $x \in \mathbb{S}^n$ and $r \in \mathbb{R}$ represents the signed geodesic distance from the ``north pole" $N =(0, 1)\in
\mathbb{H}^{n+1}$.
The interior of the horosphere is called the horo-ball and we denote by
\begin{eqnarray*}
B_{x}(r)=\{X \in \mathbb{H}^{n+1}: 0>\langle X, (x,
1)\rangle>-e^r\}.
\end{eqnarray*}
If we use the Poincar\'e ball model $\mathbb{B}^{n+1}$ of $\mathbb{H}^{n+1}$, then $B_x(r)$ corresponds to
an $(n+1)$-dimensional ball which tangents to $\partial \mathbb{B}^{n+1}$ at $x$. Furthermore, $B_x(r)$ contains the origin for $r>0$.

\begin{definition}
A compact domain $\Omega\subset \mathbb{H}^{n+1}$ (or its boundary $\partial \Omega$) is horospherically convex
(or $h$-convex for short) if every boundary point $p$ of
$\partial \Omega$ has a supporting horo-ball, i.e. a horo-ball $B$
such that $\Omega\subset \overline{B}$ and $p \in \partial B$.
When $\Omega$ is smooth, the $h$-convexity of $\Omega$ implies that the principal curvatures of $\partial \Omega$ are greater
than or equal to $1$.

For a smooth compact domain $\Omega$, we say
$\Omega$ (or $\partial \Omega$) is uniformly $h$-convex if the principal curvatures of $\partial \Omega$ are greater than $1$.
\end{definition}

\begin{definition}
Let $\Omega\subset \mathbb{H}^{n+1}$ be a compact and $h$-convex domain. For each $X \in \partial \Omega$,
$\partial \Omega$ has a supporting horo-ball $B_{x}(r)$
for some $r \in \mathbb{R}$ and $x \in \mathbb{S}^n$. Then its horospherical
Gauss map
$G: \partial \Omega\rightarrow \mathbb{S}^n$
is defined by
$
G(X)=x.
$
\end{definition}

Let $\Omega$ be a compact and $h$-convex domain in $\mathbb{H}^{n+1}$. Then
for each $x\in \mathbb{S}^n$ we define the horospherical support function
of $\Omega$ (or $\partial \Omega$) in direction $x$ by
\begin{eqnarray*}
u_{\Omega}(x):=\inf\{r \in \mathbb{R}: \Omega\subset \overline{B}_{x}(r)\}.
\end{eqnarray*}
We also have the alternative characterisation
\begin{eqnarray}\label{SD}
u_{\Omega}(x)=\sup\{\log(-\langle X, (x, 1)\rangle): X \in \Omega\}.
\end{eqnarray}
We will write $u(x)$ for $u_{\Omega}(x)$ if there is no confusion about the choice of the domain.

The support function completely determines a compact
$h$-convex domain $\Omega$, as an intersection of horo-balls:
\begin{eqnarray}\label{1-1}
\Omega=\bigcap_{x \in \mathbb{S}^n}\overline{B}_{x}(u_{\Omega}(x)).
\end{eqnarray}

If the compact domain $\Omega$ is uniformly
$h$-convex, then $G$ is a diffeomorphism from $\partial \Omega$ to $\mathbb{S}^n$. Thus, $\partial \Omega$ can be viewed
as a smooth embedding $\overline{X}=G^{-1}: \mathbb{S}^n\rightarrow \mathbb{R}^{n+1, 1}$ and
$\overline{X}$ can be written in terms of the support function $u$, as follows:
\begin{eqnarray}\label{X}
\overline{X}(x)=\frac{1}{2}\varphi(-x, 1)+\frac{1}{2}
\Big(\frac{|D \varphi|^2}{\varphi}+\frac{1}{\varphi}\Big)(x, 1)-(D\varphi, 0),
\end{eqnarray}
where $\varphi=e^u$ and
$D$ is the Levi-Civita connection of the standard metric $\sigma$ of $\mathbb{S}^{n}$.
Then, after choosing normal coordinates around $x$ on $\mathbb{S}^{n+1}$,
the area element $d\mu$ of $\Omega$ at $\overline{X}(x)=G^{-1}(x)$ can be given by
\begin{eqnarray}\label{Area}
d\mu=\sqrt{\det \langle \partial_i X, \partial_j X\rangle}d\sigma=\det A[\varphi]d\sigma,
\end{eqnarray}
where
\begin{eqnarray*}
A[\varphi]=D^2\varphi-\frac{1}{2}\frac{|D\varphi|^2}{\varphi}I+\frac{1}{2}\Big(\varphi-\frac{1}{\varphi}\Big)I
\end{eqnarray*}
and $I$ is the identity matrix.
Moreover, the compact domain $\Omega$ is uniformly
$h$-convex if and only if the matrix $A[\varphi]$ is positive definite.

Until now, we have seen many interesting similarities between the geometry of
$h$-convex domains in hyperbolic space and that of
convex Euclidean bodies.
Recently, Li-Xu \cite{Li-Xu} developed deeply this similarities.  In particular, they introduced a sum of
two sets in hyperbolic space which they called the hyperbolic $p$-sum (see Definition 1.1 \cite{Li-Xu}).

\begin{definition}
Let $\frac{1}{2}\leq p\leq 2$, $a\geq 0$, $b\geq 0$ and $a+b\geq 1$, and let $K$ and $L$ be two smooth uniformly
$h$-convex and compact domains with the horospherical support functions $u_K(x)$ and $u_L(x)$ respectively. The hyperbolic $p$-sum
$\Omega:=a\cdot K+_{p} b \cdot L$ of $K$ and $L$ is defined by the $h$-convex compact domain with the horospherical support function
\begin{eqnarray*}
u_{\Omega}(x):=\frac{1}{p}\log\Big(ae^{pu_K(x)}+be^{pu_L(x)}\Big).
\end{eqnarray*}
\end{definition}
They also gave a pointwise definition of the hyperbolic $p$-sum (see Definition 1.3 in \cite{Li-Xu}).
Then, they calculated the variation of the volume along the hyperbolic $p$-sum
(see Lemma 5.2 in \cite{Li-Xu})
\begin{eqnarray*}
\lim_{t\rightarrow 0+}\frac{\mathrm{Vol}(K+_{p} t \cdot L)-\mathrm{Vol}(K)}{t}=\frac{1}{p}\int_{\mathbb{S}^n}\varphi_{L}^{p}dS_p(K, x),
\end{eqnarray*}
where the horospherical $p$-surface area measure of a smooth uniformly $h$-convex and
compact domain $K\subset \mathbb{H}^{n+1}$ is defined by (see Definition 5.2 \cite{Li-Xu})
\begin{eqnarray*}
dS_p(K, \cdot)=\varphi_{K}^{-p}\det (A[\varphi_K])d\sigma,
\end{eqnarray*}
$u_K=\log \varphi_K$ and $u_L=\log \varphi_L$ are the horospherical support functions of smooth uniformly $h$-convex and
compact domains $K$ and $L$ respectively.
In particular, $p=0$, $dS_p(K, \cdot)$ is just the surface area measure \eqref{Area} of $K$.

Then, Li-Xu proposed the associated horospherical $p$-Minkowski problem (see Problem 5.1 \cite{Li-Xu}):
\begin{problem}\label{LX}
For a given smooth positive function $f(x)$ defined on $\mathbb{S}^n$, we ask the sufficient and necessary conditions for $f(x)$, such that
there exists a smooth, compact and uniformly $h$-convex domain $K\subset \mathbb{H}^{n+1}$ satisfying
\begin{eqnarray*}
dS_p(K, x)=f(x)d\sigma,
\end{eqnarray*}
which is equivalent to find a smooth positive solution $\varphi(x)$ with $A[\varphi(x)]>0$ for all $x \in \mathbb{S}^n$
(or a smooth uniformly $h$-convex solution for short) to the equation
\begin{eqnarray}\label{MA}
\varphi^{-p}(x)\mathrm{det}(A[\varphi(x)])=f(x),
\end{eqnarray}
where $u=\ log \varphi$ is the support function of some horo-convex domain in $\mathbb{H}^{n+1}$.
\end{problem}
In particular, for $p=0$, Problem \ref{LX} is just the prescribed surface area measure problem for
$h$-convex domains in $\mathbb{H}^{n+1}$. For $p=-n$, Problem \ref{LX} is just the prescribed
shifted Gauss curvature problem (see Section 7 in \cite{Li-Xu}) or the $n$-th symmetric
Christoffel problem in $\mathbb{H}^{n+1}$ (see Page 26 in \cite{Esp09}).

Through designing a new volume preserving curvature flow,
Li-Xu solved the existence of normalized even solution to the horospherical $p$-Minkowski problem
for all $p \in \mathbb{R}$. A function $g: \mathbb{S}^n\rightarrow \mathbb{R}$ is called even if
$g(x)=g(-x)$ for all $x \in \mathbb{S}^n$.

\begin{theorem}[Li-Xu \cite{Li-Xu}]\label{Li-X}
Given a smooth, positive and even function $f$ defined on $\mathbb{S}^n$, for any $p \in \mathbb{R}$, there exists a smooth, even
and uniformly $h$-convex solution to the equation
\begin{eqnarray}\label{MA-}
\varphi^{-p}\mathrm{det}(A[\varphi])=\gamma f
\end{eqnarray}
for some $\gamma>0$.
\end{theorem}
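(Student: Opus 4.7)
The plan is to establish Theorem \ref{Li-X} by designing a volume-preserving curvature flow whose stationary solutions are precisely the smooth, uniformly $h$-convex solutions of \eqref{MA-}. Concretely, starting from a smooth, even, uniformly $h$-convex initial domain $\Omega_0$ (for instance a geodesic ball centred at the origin), I would evolve the horospherical support function by
\begin{equation*}
\partial_t \varphi(x,t)\;=\;\bigl(\eta(t)\,f(x)-\varphi^{-p}(x,t)\det A[\varphi](x,t)\bigr)\,\Psi(\varphi,A[\varphi]),
\end{equation*}
where $\Psi>0$ is a chosen positive multiplier (picked so that the linearization is concave and so that the resulting speed has a geometric interpretation as a normal-velocity curvature flow on $\partial\Omega_t$), and $\eta(t)$ is the global Lagrange multiplier uniquely determined by $\frac{d}{dt}\mathrm{Vol}(\Omega_t)=0$, computed explicitly from the first variation formula for $\mathrm{Vol}$ along the hyperbolic $p$-sum recalled in the introduction. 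Any stationary point of this flow satisfies $\varphi^{-p}\det A[\varphi]=\gamma f$ with $\gamma=\eta_\infty$, which is exactly \eqref{MA-}.

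The second step is to develop the a priori estimates along the flow. Evenness of $\varphi(\cdot,t)$ is automatically preserved, since $f$ is even and $A[\cdot]$ commutes with the antipodal map on $\mathbb{S}^n$; hence $\Omega_t$ is origin-symmetric for all $t$. Combined with the conservation of volume, origin-symmetry is the decisive ingredient for $C^0$ bounds on $\varphi$: the horospherical support function of a centrally symmetric $h$-convex body is controlled above and below in terms of its enclosed volume, in direct analogy with the Euclidean even $L_p$-Minkowski problem. Given uniform $C^0$ bounds, I would then derive gradient and curvature bounds by applying the maximum principle to the evolution equations of $|D\varphi|^2$ and of the eigenvalues of $A[\varphi]$, exploiting the extra $\frac{1}{2}(\varphi-\varphi^{-1})I$ term in $A[\varphi]$ to absorb bad zeroth-order contributions; this keeps $A[\varphi]$ inside a compact subset of the positive cone and preserves uniform $h$-convexity.

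Once the flow is uniformly parabolic on the admissible cone, Evans--Krylov together with standard bootstrapping yields $C^{k,\alpha}$ estimates for every $k$, so the flow exists for all time. For convergence as $t\to\infty$, I would exhibit a functional monotone along the flow — a natural candidate is the horospherical $L_p$-integral $\int_{\mathbb{S}^n} f\,\varphi^{p}\,d\sigma$ (or a logarithmic analogue when $p=0$), which by the first variation formulae of Li--Xu has a definite sign along the flow and is bounded by the $C^0$ estimates. A standard Arzel\`a--Ascoli extraction then produces a smooth stationary limit, which, being even and uniformly $h$-convex, is the desired solution.

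The main obstacle I anticipate is obtaining the $C^0$ upper and lower bounds on $\varphi$ uniformly in $t$ and in $p\in\mathbb{R}$. In the Euclidean $L_p$ theory such bounds fail without evenness for $p\leq 0$, and here the situation is further complicated by the non-homogeneity of the operator $\varphi^{-p}\det A[\varphi]$ and by the distinctly hyperbolic nature of $A[\varphi]$. Extracting a genuinely hyperbolic symmetry-and-volume argument, valid for the full range $p\in\mathbb{R}$, is the crux; once the $C^0$ bounds are secured, the higher-order estimates and the convergence step are largely dictated by the parabolic theory of Monge--Amp\`ere type operators.
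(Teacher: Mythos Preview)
Your outline follows essentially the same route that the paper attributes to Li--Xu: the paper does not prove Theorem~\ref{Li-X} itself but cites \cite{Li-Xu}, noting only that Li--Xu obtained the normalized even solution ``through designing a new volume preserving curvature flow.'' Your proposal is a faithful reconstruction of that strategy (volume-preserving flow on the horospherical support function, evenness preserved, $C^0$ bounds from symmetry plus fixed volume, then higher-order parabolic estimates and convergence via a monotone functional), so at the level of approach there is nothing to compare.

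That said, what you have written is a plan, not a proof, and you flag this yourself. The genuine work---which you correctly identify as the crux---lies in the uniform $C^0$ estimate along the flow and in the curvature (upper and lower) bounds on $A[\varphi]$ that keep the flow uniformly parabolic. In the present paper the analogous static $C^0$ estimate (Lemma~\ref{C-C0}) relies on the key pinching inequality \eqref{c0-12}, which exploits evenness and the hyperbolic structure of the support function; the flow version in \cite{Li-Xu} (their Lemma~7.2) uses the same inequality together with the volume constraint. So your instinct about where the difficulty lies is right, and the mechanism you would need is already visible in Section~2 here. If you intend to turn this into a self-contained argument rather than a citation, you would still need to supply the actual monotone quantity, verify its sign computation, and carry out the $C^2$ estimate along the flow---none of which is routine for all $p\in\mathbb{R}$.
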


Due to the lack of homogeneity of the horospherical $p$-surface area measure, it is
very difficult to remove the normalizing factor $\gamma$ in Theorem \ref{Li-X}. This difficulty also appears
in Orlicz-Minkowski-type
problem \cite{Liu-Lu20} and the Gaussian Minkowski type problem \cite{Huang-Xi21, Liu22, Feng1, Feng2}. In the latter problem,
the degree theory was
used to overcome this difficulty in the even case.

In this paper, we can obtain a non-normalized solution (without the normalizing factor $\gamma$)
to Problem \ref{LX} for $-n\leq p<n$ by the degree theory.

\begin{theorem}\label{Main}
Assume $-n\leq p<n$, there exists a smooth, even and uniformly $h$-convex solution
to the equation \eqref{MA} for any smooth, positive and even function $f$ defined on $\mathbb{S}^n$.
\end{theorem}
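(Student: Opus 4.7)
My plan is to apply Leray--Schauder degree theory to \eqref{MA} on the space of even smooth functions, homotoping $f$ to a constant right-hand side for which the constant function is an explicit solution, in analogy with the degree-theoretic treatment of the even Gaussian Minkowski problem in \cite{Huang-Xi21, Feng1}. Let $X_e$ denote the Banach space of even $C^{4,\alpha}$ functions on $\mathbb{S}^n$ and $\mathcal{O}\subset X_e$ the open cone $\{\varphi>0,\,A[\varphi]>0\}$. For $p<n$ the map $c\mapsto c^{-p}\bigl(\tfrac{c^2-1}{2c}\bigr)^n$ is a smooth strictly increasing bijection of $(1,\infty)$ onto $(0,\infty)$, so I can choose $c_0>1$ such that $\varphi\equiv c_0$ solves $\varphi^{-p}\det A[\varphi]=f_0$ for any preassigned positive constant $f_0$; set $f_t=(1-t)f_0+tf$ and $\mathcal{F}_t(\varphi)=\varphi^{-p}\det A[\varphi]-f_t$.

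The technical core, and the main obstacle, is the a priori estimate: every even uniformly $h$-convex solution $\varphi_t$ of $\mathcal{F}_t(\varphi)=0$ must satisfy
\[
C^{-1}\le \varphi_t\le C,\qquad A[\varphi_t]\ge C^{-1}I,\qquad \|\varphi_t\|_{C^{4,\alpha}}\le C
\]
with $C$ independent of $t\in[0,1]$. The two-sided $C^0$ estimate is where the evenness and the range $-n\le p<n$ enter essentially: the lower bound rules out a horoball collapsing toward a point on $\mathbb{S}^n$, and the upper bound combines the constraint $A[\varphi]>0$ with an integration of \eqref{MA} against suitably chosen test functions --- antipodal symmetry prevents the support function from being small on one hemisphere and large on the opposite, and the resulting integral inequality only closes when $-n\le p<n$. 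Given $C^0$, the gradient and upper-$C^2$ bounds follow from the maximum principle applied to \eqref{MA} and its derivatives; the Krylov--Evans--Caffarelli theory for concave fully nonlinear elliptic equations gives uniform ellipticity and the $C^{2,\alpha}$ bound, and a Schauder bootstrap yields the $C^{4,\alpha}$ bound.

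To compute the degree at $t=0$, linearize $\mathcal{F}_0$ at $\varphi=c_0$. Using $A[c_0]=\alpha_0 I$ with $\alpha_0=(c_0^2-1)/(2c_0)$, a direct computation gives
\[
L\psi = c_0^{-p}\alpha_0^{\,n-1}\bigl(\Delta\psi + \lambda\psi\bigr),\qquad \lambda = \tfrac{1}{2}\Bigl[(n-p)+\tfrac{n+p}{c_0^2}\Bigr].
\]
For $-n\le p<n$ one verifies $0<\lambda\le n<2(n+1)$ for every $c_0>1$, so $\lambda$ avoids the eigenvalues $0, 2(n+1), 4(n+3),\ldots$ of $-\Delta$ acting on even spherical harmonics (the odd modes, including the eigenvalue $n$, are excluded from $X_e$). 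Hence $L\colon X_e\to C^{2,\alpha}_e$ is an isomorphism and $c_0$ is an isolated nondegenerate zero of $\mathcal{F}_0$ with local Leray--Schauder index $\pm 1$. A supplementary uniqueness argument --- applying the strict monotonicity of $g(c)=c^{-p}((c^2-1)/(2c))^n$ at the extrema of any even $h$-convex solution of $\mathcal{F}_0=0$ and iterating to squeeze $\varphi_{\max}=\varphi_{\min}$ --- identifies $c_0$ as the only such solution in $\mathcal{O}$.

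Rewriting \eqref{MA} as a fixed-point problem $\varphi = \varphi-(-\Delta+\mathrm{Id})^{-1}\mathcal{F}_t(\varphi)$ makes it a compact perturbation of the identity on $X_e$; by the a priori estimates all solutions lie in a fixed bounded open set $\mathcal{U}\subset\mathcal{O}$ throughout the homotopy, so Leray--Schauder degree on $\mathcal{U}$ is well defined and invariant in $t$. By the linearization computation above it is nonzero at $t=0$, hence nonzero at $t=1$, producing an even uniformly $h$-convex $C^{4,\alpha}$ solution of \eqref{MA}, which is smooth by elliptic regularity. The uniform two-sided $C^0$ estimate for even solutions across the full range $-n\le p<n$ is by far the principal technical difficulty; both endpoints are critical and the antipodal symmetry is essential, while the other steps are essentially formal once the estimates are in place.
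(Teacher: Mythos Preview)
Your overall architecture matches the paper's: homotope $f$ to a constant, compute the degree via the linearization at the constant solution, and use a priori estimates to confine all solutions to a bounded open set. Your linearization computation is correct, and your observation that $0<\lambda\le n$ lies strictly between the even eigenvalues $0$ and $2(n+1)$ of $-\Delta$ (so that $L$ is invertible on $X_e$ for \emph{every} $c_0>1$) is in fact slightly cleaner than the paper's phrasing. However, two points deserve correction.

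First, you have misplaced the role of the lower cutoff $p\ge -n$. The paper's $C^0$ estimate (Lemma~\ref{C-C0}) holds for all $p<n$ and uses only the maximum principle applied to \eqref{MA} together with the purely geometric inequality
\[
\tfrac12\bigl(\max_{\mathbb{S}^n}\varphi+1/\max_{\mathbb{S}^n}\varphi\bigr)\le \min_{\mathbb{S}^n}\varphi,
\]
valid for any even uniformly $h$-convex body (Lemma~2.1). No integration against test functions is needed, and ``the resulting integral inequality only closes when $-n\le p<n$'' is simply not how the argument runs: the restriction $p\ge -n$ plays no role whatsoever in the a priori estimates.

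Second, and this is the genuine gap, your ``supplementary uniqueness argument'' for the constant solution at $t=0$ does not work. Applying the monotonicity of $g(c)=c^{-p}\bigl((c^2-1)/(2c)\bigr)^n$ at the extrema of an even solution yields only $g(\varphi_{\min})\le f_0\le g(\varphi_{\max})$, i.e.\ $\varphi_{\min}\le c_0\le\varphi_{\max}$; there is no iteration that squeezes these together. Indeed at $p=-n$ the equation $\varphi^{n}\det A[\varphi]=\gamma$ admits an entire $(n{+}1)$-parameter family of non-constant solutions, only one of which is even. The paper relies on Theorem~8.1(6)(7) of Li--Xu \cite{Li-Xu} for uniqueness of the \emph{even} solution when $-n\le p<n$ (Lemma~\ref{U-C}); this is precisely where the lower bound on $p$ enters the proof, and it is a nontrivial external input rather than a consequence of monotonicity of $g$. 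Without it, the degree at $t=0$ is an uncomputed sum of local indices and you cannot conclude.
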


It should be noted that the continuity method was not applied to prove existence.
The reason is that the continuity method requires that the corresponding
linearized equation is invertible on any admissible solution and this result is not available for the equation \eqref{MA} studied
in this paper. However, within the framework of the degree theory developed in \cite{Li89} it suffices to know invertibility
on constant solutions which is guaranteed by the uniqueness of even constant solutions to the equation \eqref{MA} for $-n\leq p<n$
(see Theorem 8.1 (6)(7) in \cite{Li-Xu}).
For other ranges $p\geq n$ or $p<-n$, either the constant solution may not be unique or its uniqueness is unknown.
(see Theorem 8.1 in \cite{Li-Xu}). So, it is difficult to
prove existence by the degree theory,
although the a prior estimates for solutions to the equation \eqref{MA} can be established for all $p<n$.

\begin{remark}
It is interesting to consider the uniqueness of solutions to the equation \eqref{MA} when $f$ is not constant. In contrast with
$L_p$ Minkowski problem in Euclidean space for which the uniqueness of solutions is
guaranteed by $L_p$ Brunn-Minkowski inequality for $p\geq 1$ \cite{Lu93}, the uniqueness of solutions to the horospherical $p$-Minkowski problem
\ref{LX} is unknown due to the lack of analogous inequality in hyperbolic space (see related discussions in  Section 9 \cite{Li-Xu}).
\end{remark}

The present paper is built up as follows. In Sect. 2, we obtain the a priori estimates.  We
will prove Theorem \ref{Main} in Sect. 3.

\section{The a priori estimates}

For convenience, in the following of this paper, we always assume that $f$ is
a smooth, positive and even function on $\mathbb{S}^n$, and $\varphi$ is a smooth, even and uniformly $h$-convex solution to the equation \eqref{MA}.
Moreover, let $\Omega$ be the smooth, compact and uniformly $h$-convex domain in $\mathbb{H}^{n+1}$ with the horospherical support function $u=\log \varphi$.
Clearly, $\Omega$ is symmetric about the origin and $\varphi(x)> 1$ for $x \in \mathbb{S}^n$.

The following easy and important equality is key for
the $C^0$ estimate.
\begin{lemma}
We have
\begin{eqnarray}\label{c0-12}
\frac{1}{2}\Big(\max_{\mathbb{S}^n}\varphi+\frac{1}{\max_{\mathbb{S}^n}\varphi}\Big)\leq \min_{\mathbb{S}^n}\varphi.
\end{eqnarray}
\end{lemma}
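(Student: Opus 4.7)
The plan is to exploit the explicit parametrization \eqref{X} of $\partial\Omega$ at a maximum point of $\varphi$, together with the variational characterization \eqref{SD} of the support function. Set $M:=\max_{\mathbb{S}^n}\varphi$ and pick any $x_0\in\mathbb{S}^n$ with $\varphi(x_0)=M$. Smoothness gives $D\varphi(x_0)=0$, and the hypothesis that $\varphi$ is even simultaneously forces $\varphi(-x_0)=M$ and $D\varphi(-x_0)=0$, providing a second critical point at the antipode.

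The first step is to substitute these data into \eqref{X}. Since $D\varphi$ vanishes at $\pm x_0$, only the first two terms survive, and one reads off the two boundary points
\begin{equation*}
\overline{X}(\pm x_0)=\Bigl(\pm\tfrac{1}{2}\bigl(\tfrac{1}{M}-M\bigr)x_0,\ \tfrac{1}{2}\bigl(M+\tfrac{1}{M}\bigr)\Bigr)\in\overline{\Omega}.
\end{equation*}
A sanity check via $\langle\overline{X}(x_0),(x_0,1)\rangle=-M=-e^{u(x_0)}$ confirms the substitution.

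Next I would invoke \eqref{SD}, which in exponential form reads $\varphi(y)\ge -\langle X,(y,1)\rangle$ for every $X\in\Omega$ and every $y\in\mathbb{S}^n$. Plugging in $X=\overline{X}(x_0)$ and then $X=\overline{X}(-x_0)$ produces the pair of linear lower bounds
\begin{equation*}
\varphi(y)\ \ge\ \tfrac{1}{2}\bigl(M+\tfrac{1}{M}\bigr)\ \pm\ \tfrac{1}{2}\bigl(M-\tfrac{1}{M}\bigr)\langle x_0,y\rangle.
\end{equation*}
Adding the two eliminates the cross term and yields $\varphi(y)\ge\tfrac{1}{2}\bigl(M+\tfrac{1}{M}\bigr)$ for every $y\in\mathbb{S}^n$; taking $y$ to be a minimum point of $\varphi$ completes the argument.

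There is no genuine obstacle here; the argument is algebraic once \eqref{X} and \eqref{SD} are in hand. The conceptual content is that evenness supplies the second maximum point $-x_0$, and averaging the resulting pair of linear support inequalities kills the $\langle x_0,y\rangle$ contribution. Without evenness only one such inequality would be available, and evaluating it at $y=-x_0$ would degenerate to the far weaker bound $\varphi(-x_0)\ge 1/M$.
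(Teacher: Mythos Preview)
Your proof is correct and uses the same two ingredients as the paper---the parametrization \eqref{X} evaluated at a maximum point of $\varphi$ (where $D\varphi$ vanishes) and the support characterization \eqref{SD}. The only difference is in how evenness enters. The paper works with a single maximum point $x_1$, obtains
\[
\varphi(x)\ \ge\ \tfrac12\bigl(M+\tfrac1M\bigr)+\tfrac12\bigl(M-\tfrac1M\bigr)\langle x_1,x\rangle,
\]
notes that the right-hand side dominates $\tfrac12(M+1/M)$ on the hemisphere $\langle x_1,x\rangle\ge 0$ (since $M\ge 1$), and then invokes evenness to place the minimum point in that hemisphere. You instead use evenness up front to produce the antipodal maximum point $-x_0$, obtain the companion inequality with the sign of the cross term flipped, and average. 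Your route is marginally cleaner in that it yields the lower bound for \emph{every} $y\in\mathbb{S}^n$ at once rather than only on a half-sphere, but the two arguments are essentially equivalent rearrangements of the same computation.
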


\begin{proof}
The inequality can be found in the proof of Lemma 7.2 in \cite{Li-Xu}. For completeness, we give a proof here.
Assume that $\varphi(x_1)=\max_{\mathbb{S}^n}\varphi$ and denote $\overline{X}(x_1)=G^{-1}(x_1)$ as before. Then, we have for any $x \in \mathbb{S}^n$ by
the definition of the horospherical support function \eqref{SD}
\begin{eqnarray*}
-\langle \overline{X}(x_1), (x, 1)\rangle\leq \varphi(x), \quad \forall x \in \mathbb{S}^n.
\end{eqnarray*}
Substituting the expression \eqref{X} for $\overline{X}$ into the above equality yields
\begin{eqnarray}\label{D0-3}
\frac{1}{2}\varphi(x_1)(1+\langle x_1, x\rangle)+\frac{1}{2}\frac{1}{\varphi(x_1)}\Big(1-\langle x_1, x\rangle\Big)\leq \varphi(x),
\end{eqnarray}
where we used the fact $D\varphi(x_1)=0$. Note that $\varphi(x_1)\geq 1$, we find from \eqref{D0-3}
\begin{eqnarray}\label{D0-4}
\frac{1}{2}\Big(\varphi(x_1)+\frac{1}{\varphi(x_1)}\Big)\leq \varphi(x) \quad \mbox{for} \quad \langle x, x_1\rangle\geq 0.
\end{eqnarray}
Since $\varphi$ is even, we can assume
that the minimum point $x_0$ of $\varphi(x)$ satisfies $\langle x_0, x_1\rangle\geq 0$. Thus,
the equality \eqref{c0-12} follows that from \eqref{D0-4}.
\end{proof}

\begin{remark}
It is very interesting to compare the inequality \eqref{c0-12} with the the analogous inequality for convex bodies in
Euclidean space. In fact,
for an origin-symmetric convex body in Euclidean space, the definition of its support function gives
\begin{eqnarray}\label{E}
|\langle x_{1}, x_{0}\rangle|\max_{\mathbb{S}^n}h \leq \min_{\mathbb{S}^n}h,
\end{eqnarray}
where $h$ is the support function of the convex body, $x_1$ and $x_0$ are the maximum and minimum points of
$h$ respectively. Clearly, the inequality \eqref{c0-12} in hyperbolic space is better than \eqref{E}, since
 the term $|\langle x_{1}, x_{0}\rangle|$ in \eqref{E} may not be bounded from below.
\end{remark}

Now we begin to consider the $C^0$-estimate. A similar estimate is obtained in Lemma 7.2 \cite{Li-Xu} when
the volume of $h$-convex domain is fixed. But for our case, the volume is not fixed,
we use the maximum principle to get the $C^0$-estimate.
\begin{lemma}\label{C-C0}
If $p<n$, we have
\begin{eqnarray}\label{c0}
0<\frac{1}{C}\leq u(x)\leq C, \quad \forall \ x \in \mathbb{S}^n,
\end{eqnarray}
where $C$ is a positive constant depending on $p$, $n$ and $f$.
\end{lemma}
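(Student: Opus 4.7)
The plan is to apply the maximum principle at the extrema of $\varphi$ and close the loop with the key comparison \eqref{c0-12}. Let $x_1, x_0 \in \mathbb{S}^n$ be a maximum and a minimum point of $\varphi$. At these points $D\varphi = 0$, $D^2\varphi(x_1) \leq 0$, and $D^2\varphi(x_0) \geq 0$, so the matrix
\[
A[\varphi] = D^2\varphi - \frac{|D\varphi|^2}{2\varphi}I + \frac{1}{2}\Big(\varphi - \frac{1}{\varphi}\Big)I
\]
is sandwiched at these two points by $\tfrac12(\varphi - 1/\varphi)I$:
\[
A[\varphi](x_1) \leq \tfrac{1}{2}\bigl(\varphi(x_1) - 1/\varphi(x_1)\bigr)I, \qquad A[\varphi](x_0) \geq \tfrac{1}{2}\bigl(\varphi(x_0) - 1/\varphi(x_0)\bigr)I.
\]
Taking determinants and inserting these into \eqref{MA}, I get the two scalar estimates
\[
\psi(\max \varphi) \geq 2^n \min_{\mathbb{S}^n} f, \qquad \psi(\min \varphi) \leq 2^n \max_{\mathbb{S}^n} f,
\]
where $\psi(t) := t^{-p}(t - 1/t)^n$ for $t \geq 1$.

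Under the standing hypothesis $p < n$, the function $\psi$ is continuous on $[1, \infty)$ with $\psi(1) = 0$ and $\psi(t) \to +\infty$ as $t \to \infty$ (both use $n-p > 0$). Consequently the first scalar estimate forces $\max \varphi \geq t_1$ for some $t_1 = t_1(n, p, \min f) > 1$, while the second forces $\min \varphi \leq M$ for some $M = M(n, p, \max f) < \infty$.

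To finish, I would combine these one-sided bounds with \eqref{c0-12}. Reading it as $\max \varphi + 1/\max \varphi \leq 2 \min \varphi$, the estimate $\min \varphi \leq M$ immediately yields $\max \varphi \leq 2M$, giving the upper $C^0$-bound on $u = \log \varphi$. Reading it instead as $\min \varphi \geq \tfrac{1}{2}(\max \varphi + 1/\max \varphi)$, the estimate $\max \varphi \geq t_1 > 1$ forces $\min \varphi \geq \tfrac{1}{2}(t_1 + 1/t_1) > 1$ (strictly, by AM--GM), which produces a uniform positive lower bound on $u$. The only place the hypothesis enters nontrivially is in ensuring that $\psi$ blows up as $t \to \infty$; this is exactly why the same strategy does not extend past $p = n$, and no geometric input beyond the preceding lemma is needed.
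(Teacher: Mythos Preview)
Your proof is correct and follows essentially the same route as the paper: apply the maximum principle at the extrema of $\varphi$ to obtain one-sided bounds via the auxiliary function $\psi(t)=t^{-p}(t-1/t)^n$ (which is $2^n$ times the paper's $g$), and then close up using the key inequality \eqref{c0-12}. The only cosmetic difference is that the paper explicitly invokes monotonicity of $g$ on $[1,\infty)$, whereas you rely only on continuity together with $\psi(1)=0$ and $\psi(t)\to\infty$, which suffices.
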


\begin{proof}
Using the maximum principle, we have from the equation \eqref{MA}
\begin{eqnarray*}
(\max_{\mathbb{S}^n}\varphi)^{n-p}\frac{1}{2^n}\bigg[1-\frac{1}{(\max_{\mathbb{S}^n}\varphi)^2}\bigg]^n\geq C>0
\end{eqnarray*}
and
\begin{eqnarray*}
(\min_{\mathbb{S}^n}\varphi)^{n-p}\frac{1}{2^n}\bigg[1-\frac{1}{(\min_{\mathbb{S}^n}\varphi)^2}\bigg]^n\leq C.
\end{eqnarray*}
Since the function
\begin{eqnarray*}
g(x)=x^{n-p}\frac{1}{2^n}\bigg(1-\frac{1}{x^2}\bigg)^n
\end{eqnarray*}
is increasing in $[1, +\infty)$ for $p<n$, $g(1)=0$ and $g(+\infty)=+\infty$, we obtain
\begin{eqnarray}\label{c0-11}
\min_{\mathbb{S}^n}\varphi\leq C, \quad \mbox{and} \quad \max_{\mathbb{S}^n}\varphi\geq C>1.
\end{eqnarray}
Combining \eqref{c0-11} and \eqref{c0-12}, we find
\begin{eqnarray*}
1<C\leq\min_{\mathbb{S}^n}\varphi\leq \max_{\mathbb{S}^n}\varphi\leq C^{\prime},
\end{eqnarray*}
which implies that
\begin{eqnarray*}
0<\frac{1}{C}\leq\min_{\mathbb{S}^n}u\leq \max_{\mathbb{S}^n}u\leq C.
\end{eqnarray*}
So, we complete the proof.
\end{proof}

As a corollary, we have the gradient estimate from Lemma 7.3 in \cite{Li-Xu}.

\begin{corollary}\label{C-C1}
We have
\begin{eqnarray}\label{c1}
|D\varphi(x)|\leq C, \quad \forall \ x \in \mathbb{S}^n,
\end{eqnarray}
where $C$ is a positive constant depending only on the constant in Lemma \ref{C-C0}.
\end{corollary}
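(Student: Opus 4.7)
The plan is to exploit the evenness of $\varphi$ together with the explicit formula \eqref{X} for the boundary embedding $\overline{X}$. Since $\varphi$ is even we have $u(-x)=u(x)$, and the representation \eqref{1-1} therefore yields the inclusion $\Omega\subset \overline{B}_{-x}(u(x))$ for every $x\in\mathbb{S}^n$. Applying this inclusion to the boundary point $\overline{X}(x)\in\partial\Omega$ itself gives the pointwise inequality
\begin{eqnarray*}
-\langle \overline{X}(x),(-x,1)\rangle \leq \varphi(x).
\end{eqnarray*}

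Next I would expand the left-hand side using \eqref{X}, noting that $D\varphi(x)$ is tangent to $\mathbb{S}^n$ at $x$ and hence satisfies $\langle D\varphi(x),x\rangle=0$ in the ambient Euclidean sense. Separating the spatial and time components of $\overline{X}(x)$ from \eqref{X} and summing produces
\begin{eqnarray*}
-\langle \overline{X}(x),(-x,1)\rangle = \frac{1+|D\varphi(x)|^2}{\varphi(x)},
\end{eqnarray*}
so the displayed inequality collapses to $|D\varphi(x)|^2 \leq \varphi(x)^2-1$. Combined with the upper bound $\varphi\leq C$ from Lemma \ref{C-C0}, this at once yields $|D\varphi|\leq C'$ on $\mathbb{S}^n$.

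The main obstacle is conceptual rather than computational: one must recognize that the evenness hypothesis is precisely the ingredient that forces the gradient bound. A single supporting horoball $\overline{B}_x(u(x))$ is unbounded in $\mathbb{H}^{n+1}$ and constrains $\overline{X}(x)$ only on one side; it is the paired opposite horoball $\overline{B}_{-x}(u(x))$, available only because $u$ is even, that pins $\overline{X}(x)$ from the other side and forces $|D\varphi|$ to be small. Once this observation is made, the remaining calculation — essentially adding two linear combinations of the coordinates of $\overline{X}(x)$ — is entirely routine, and no additional structure of the Monge-Ampère equation \eqref{MA} is needed beyond the $C^0$ estimate already in hand.
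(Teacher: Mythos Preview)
Your computation is correct: from \eqref{X} one finds $-\langle \overline{X}(x),(-x,1)\rangle=(1+|D\varphi(x)|^2)/\varphi(x)$, and the support characterisation \eqref{SD} gives $-\langle \overline{X}(x),(-x,1)\rangle\le \varphi(-x)$, hence $|D\varphi(x)|^2\le \varphi(x)\varphi(-x)-1$. With the $C^0$ bound from Lemma~\ref{C-C0} this yields the claim. The paper itself gives no argument here but simply invokes Lemma~7.3 of \cite{Li-Xu}; your derivation is precisely the content of that lemma, so in substance the approaches coincide.

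One remark on your commentary: you overstate the role of evenness at this step. The inequality $|D\varphi(x)|^2\le \varphi(x)\varphi(-x)-1$ holds for \emph{any} smooth uniformly $h$-convex domain, symmetric or not, and the two-sided $C^0$ bound of Lemma~\ref{C-C0} already controls $\varphi(-x)$. So the $C^1$ estimate really does depend only on the constant in Lemma~\ref{C-C0}, as the corollary asserts; evenness enters the paper's scheme earlier, in the proof of \eqref{c0-12}, not here. Your use of $\varphi(-x)=\varphi(x)$ is harmless but unnecessary, and the sentence claiming that ``evenness is precisely the ingredient that forces the gradient bound'' should be softened.
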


We give some notations before considering the $C^2$ estimate.
Denote by
\begin{eqnarray*}
U_{ij}=\varphi_{ij}-\frac{1}{2}\frac{|D\varphi|^2}{\varphi}\delta_{ij}+\frac{1}{2}(\varphi-\frac{1}{\varphi})\delta_{ij}
\end{eqnarray*}
and
\begin{eqnarray*}
F(U)=(\det U)^{\frac{1}{n}}, \quad F^{ij}=\frac{\partial F}{\partial U_{ij}}, \quad F^{ij, kl}=\frac{\partial^2 F}{\partial U_{ij}\partial U_{kl}}.
\end{eqnarray*}

\begin{lemma}\label{C2}
We have for $1\leq i\leq n$
\begin{eqnarray}\label{c2}
\lambda_i(U(x))\leq C, \quad \forall \ x \in \mathbb{S}^n,
\end{eqnarray}
where $\lambda_1(U), ..., \lambda_n(U)$ are eigenvalues of the matrix $U$ and $C$ is a positive constant depending
only on the constant in Lemma \ref{C-C0} and Corollary \ref{C-C1}.
\end{lemma}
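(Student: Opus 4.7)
The plan is to apply the maximum principle to the largest eigenvalue of $U$. Let $x_0 \in \mathbb{S}^n$ be a point where $\lambda_{\max}(U(\cdot))$ attains its maximum, and rotate coordinates on $\mathbb{S}^n$ around $x_0$ so that $U(x_0)$ is diagonal with $U_{11}(x_0) = \lambda_{\max}(U(x_0))$. In these fixed coordinates the function $\widetilde W(x) := U_{11}(x)$ satisfies $\widetilde W \leq \lambda_{\max}(U)$ with equality at $x_0$, so $\widetilde W$ also attains its maximum at $x_0$. The first-order condition gives $U_{11,i}(x_0) = 0$ and the second-order condition, combined with $F^{ij}(x_0)$ being diagonal with entries $U^{ii}$, gives
\beq
\sum_i U^{ii}\, U_{11,ii}(x_0) \leq 0.
\eeq

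Next, I take the logarithm of \eqref{MA}, $\log \det U = p\log \varphi + \log f$, and differentiate twice in the direction $e_1$. Using concavity of $\log\det$ (which yields a nonnegative quadratic term that I drop) gives
\beq
\sum_i U^{ii}\, U_{ii,11}(x_0) \geq \bigl(p\log\varphi + \log f\bigr)_{11}(x_0).
\eeq
The central computation is then to swap $U_{ii,11}$ with $U_{11,ii}$ via the Ricci identities on $\mathbb{S}^n$. Because $\mathbb{S}^n$ has constant sectional curvature $1$, the scalar commutator $\varphi_{ii,11} - \varphi_{11,ii} = \varphi_{ii} - \varphi_{11}$ (for $i\neq 1$) plus curvature contributions from the non-Hessian part of $U$, which depend only on $\varphi$ and $D\varphi$ and are therefore controlled by Lemma \ref{C-C0} and Corollary \ref{C-C1}, produces
\beq
U_{ii,11} = U_{11,ii} + (U_{ii} - U_{11}) + O(1+U_{ii}).
\eeq
Summing against $U^{ii}$ and using $\sum_i U^{ii}U_{ii} = n$ converts the previous two displays into
\beq
U_{11}(x_0)\sum_i U^{ii} \leq n + C\sum_i U^{ii} + C - \bigl(p\log\varphi + \log f\bigr)_{11}(x_0).
\eeq

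The subtle point is the right-hand side: $(p\log\varphi)_{11} = p\varphi_{11}/\varphi - p(\varphi_1/\varphi)^2 = (p/\varphi)U_{11} + \text{(bounded)}$ at $x_0$, so the bad term contains $U_{11}$ itself. Rearranging, I obtain
\beq
U_{11}(x_0)\Bigl(\sum_i U^{ii} + \frac{p}{\varphi(x_0)}\Bigr) \leq C\bigl(1+\sum_i U^{ii}\bigr).
\eeq
To finish, I bound $\sum U^{ii}$ from below in terms of $U_{11}$ itself by AM-GM applied to $U^{22},\dots,U^{nn}$:
\beq
\sum_i U^{ii} \geq (n-1)\Bigl(\frac{U_{11}}{\det U}\Bigr)^{1/(n-1)} = (n-1)\Bigl(\frac{U_{11}}{f\varphi^p}\Bigr)^{1/(n-1)} \geq c\, U_{11}^{1/(n-1)},
\eeq
where $c>0$ uses the $C^0$ bounds on $\varphi$, $f$ and the restriction $p < n$. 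Hence, once $U_{11}$ exceeds a threshold depending on $|p|$, the bracketed factor is positive and in fact $\geq \tfrac{1}{2}\sum U^{ii}$, which yields $U_{11}(x_0) \leq C$.

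The main obstacle is the bookkeeping of the fourth-order commutator on $\mathbb{S}^n$, together with the absorption of the $(p/\varphi)U_{11}$ term into $U_{11}\sum U^{ii}$. The former is a routine but careful Ricci-identity computation; the latter relies on the AM-GM lower bound above, which degrades precisely when $p\geq n$ — consistent with the hypothesis $-n\leq p<n$ in the theorem.
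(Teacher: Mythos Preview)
Your overall strategy---apply the maximum principle to $\lambda_{\max}(U)$, differentiate $\log\det U$ twice, commute covariant derivatives, and close via AM--GM on $\sum U^{ii}$---is a standard and reasonable alternative to the paper's proof. The paper instead takes $W=\Delta\varphi$ as the test function and works with $F=(\det U)^{1/n}$. But your sketch has a genuine gap at the commutator step.

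You claim
\[
U_{ii,11}=U_{11,ii}+(U_{ii}-U_{11})+O(1+U_{ii}),
\]
justifying the error by saying the ``non-Hessian part of $U$ \ldots\ depend[s] only on $\varphi$ and $D\varphi$''. That is true of the non-Hessian part itself, but what enters $U_{ii,11}-U_{11,ii}$ is its \emph{second} covariant derivative, which involves $D^2\varphi$ and $D^3\varphi$. Concretely, $\bigl(\tfrac{|D\varphi|^2}{2\varphi}\bigr)_{11}$ contains $\tfrac{1}{\varphi}\sum_k\varphi_{k1}^2$, which at $x_0$ equals $\tfrac{1}{\varphi}\varphi_{11}^2\sim \tfrac{1}{\varphi}U_{11}^2$, and also third-order terms $\tfrac{1}{\varphi}\varphi_k\varphi_{k11}$; likewise $\bigl(\tfrac{|D\varphi|^2}{2\varphi}\bigr)_{ii}$ contains $\tfrac{1}{\varphi}\varphi_{ii}^2$ and $\tfrac{1}{\varphi}\varphi_k\varphi_{kii}$. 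None of these is $O(1+U_{ii})$. The $\varphi_{11}^2$ term happens to enter with a favorable sign after rearrangement, but the third derivatives $\varphi_{kii}$ (for $i\neq 1$) are \emph{not} controlled by your first-order condition $U_{11,k}=0$; you would need to differentiate the equation once and use $\sum_i U^{ii}U_{ii,k}=(p\log\varphi+\log f)_k$ to handle them, and then check that the resulting $O(U_{11})\sum_i U^{ii}$ contributions can really be absorbed. As written, the displayed commutator identity is false and the subsequent inequalities do not follow.

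By contrast, the paper's choice $W=\Delta\varphi$ makes both issues disappear cleanly. The first-order condition $(\Delta\varphi)_j=0$ kills the third-order terms $\sum_q\varphi_j\varphi_{jqq}$ directly, and the quadratic piece $\tfrac{1}{\varphi}\sum_q\varphi_{qq}^2$ emerges with a good sign and, via $(\Delta\varphi)^2\le n\sum_q\varphi_{qq}^2$ and $\sum_i F^{ii}\ge 1$, yields an inequality of the form $C(\Delta\varphi)^2\le C\Delta\varphi+C$. Your route can likely be repaired, but it requires substantially more bookkeeping than you indicate, and the $O(1+U_{ii})$ assertion must be replaced by a careful accounting of these second- and third-order contributions.
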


\begin{proof}
Since
\begin{eqnarray*}
\lambda_i(U)\leq \mathrm{tr} U=\Delta \varphi-\frac{n}{2\varphi}|D\varphi|^2+\frac{n}{2}(\varphi-\frac{1}{\varphi}), \quad \forall 1\leq i\leq n,
\end{eqnarray*}
it is sufficient to prove $\Delta\varphi\leq C$ in view of $C^0$ estimate \eqref{c0} and $C^1$ estimate \eqref{c1}.
Moreover, these two estimates together with the positivity of the matrix $U$ imply $\lambda_{i}(D^2\varphi)\geq -C$ for $1\leq i\leq n$.
Thus,
\begin{eqnarray}\label{C2-3}
|\lambda_i(D^2\varphi)|\leq C\max \{ \max_{\mathbb{S}^n}\Delta \varphi, 1\} \quad \forall 1\leq i\leq n.
\end{eqnarray}
We take the auxiliary function
$$W(x)=\Delta \varphi.$$
Assume $x_0$ is the maximum point of $W$. After an appropriate
choice of the normal frame at $x_0$, we further assume $U_{ij}$, hence $\varphi_{ij}$ and $F^{ij}$ is diagonal at the point $x_0$. Then,
\begin{equation}\label{W1}
W_i(x_0)=\sum_{k}\varphi_{kki}=0,
\end{equation}
and
\begin{equation}\label{W2}
W_{ii}(x_0)=\sum_{k}\varphi_{kkii}\leq0.
\end{equation}
From the positivity of $F^{ij}$ and \eqref{W2}, we arrive at $x_0$ if $\Delta \varphi$ is large enough
\begin{eqnarray*}
0&\ge&\sum_iF^{ii}W_{ii}=\sum_iF^{ii}\sum_{k}\varphi_{kkii}\geq\sum_iF^{ii}\sum_{k}\big(\varphi_{iikk}-C\Delta \varphi\big),
\end{eqnarray*}
where we use Ricci identity and the equality \eqref{C2-3} to get the last inequality. Thus it
follows from the definition of $U$, \eqref{c0}, \eqref{c1} and \eqref{W1},
\begin{eqnarray}\label{C2-1}
0&\ge&\sum_iF^{ii}\sum_{q}\bigg[U_{iiqq}+\Big(\frac{1}{2\varphi}|D\varphi|^2\Big)_{qq}-
\frac{1}{2}\Big(\varphi-\frac{1}{\varphi}\Big)_{qq}\bigg]
-C\sum_{i}F^{ii}\Delta \varphi\nonumber\\
&\ge&\sum_iF^{ii}\sum_{q}\bigg[U_{iiqq}+\frac{1}{\varphi}(\varphi_{qq})^2-C\Delta\varphi-C\bigg]
-C\sum_iF^{ii}\Delta \varphi.
\end{eqnarray}
Differentiating the equation \eqref{MA} twice gives
\begin{eqnarray*}
F^{ii}U_{iiqq}+F^{ij, kl}U_{ijq}U_{klq}=\Big[(\varphi^pf)^{\frac{1}{n}}\Big]_{qq},
\end{eqnarray*}
which yields
\begin{eqnarray}\label{C2-2}
F^{ii}U_{iiqq}\geq -C\Delta \varphi-C
\end{eqnarray}
in view of \eqref{c0} and \eqref{c1}.
Substituting \eqref{C2-2} into \eqref{C2-1} and using $(\Delta \varphi)^2 \leq n \sum_{q}(\varphi_{qq})^2$, we have
\begin{eqnarray}\label{QQ}
0\ge\bigg(C(\Delta \varphi)^2-C\Delta \varphi-C\bigg)\sum_{i}F^{ii}-C\Delta \varphi-C.
\end{eqnarray}
Note that
\begin{eqnarray}\label{C2-8}
\sum_{i}F^{ii}\geq 1.
\end{eqnarray}
Then we conclude at $x_0$ by combining the inequalities \eqref{QQ} and \eqref{C2-8}
\begin{eqnarray*}
C\geq |\Delta \varphi|^2
\end{eqnarray*}
if $\Delta \varphi$ is chosen large enough.
So, we complete the proof.
\end{proof}

\section{The proof of the main theorem}

In this section, we use the degree theory for nonlinear elliptic
equations developed in \cite{Li89} to prove Theorem \ref{Main}. Such approach was also used
in prescribed curvature problems for star-shaped hypersuraces \cite{An, Li02, Jin, Li-Sh}, prescribed curvature problems for
convex hypersurfaces
\cite{Guan02, Guan21} and the Gaussian
Minkowski type problem \cite{Huang-Xi21, Liu22, Feng1, Feng2}.

For the use of the degree theory, the uniqueness of constant solutions to the equation
\eqref{MA} is important for us. Fortunately, this fact can be guaranteed by Theorem 8.1 (6)(7) in \cite{Li-Xu}
for $-n\leq p<n$. We summarize it as follows.

\begin{lemma}\label{U-C}
For $-n\leq p<n$, there exists a unique even solution $\varphi=c$
to the equation
\begin{eqnarray}\label{MA-c}
\varphi^{-p}(x)\mathrm{det}(A[\varphi(x)])=\gamma
\end{eqnarray}
for any $\gamma>0$, where $c$ is the unique positive solution to
the equation
\begin{eqnarray}\label{con}
c^{-p}\Big(\frac{1}{2}(c-c^{-1})\Big)^n=\gamma.
\end{eqnarray}
\end{lemma}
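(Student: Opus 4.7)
The plan is to separate the statement into two independent assertions: (i) equation \eqref{con} has a unique positive solution $c$, and this $c > 1$ gives rise to a constant solution $\varphi \equiv c$ of \eqref{MA-c}; (ii) no other even solution of \eqref{MA-c} exists. The first is an elementary one-variable calculus fact; the second is the genuine uniqueness statement, which the paper attributes to Theorem 8.1~(6)(7) of \cite{Li-Xu}.

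For (i), I would first substitute $\varphi \equiv c$ into \eqref{MA-c}. Since $D\varphi = 0$ and $D^2\varphi = 0$, the matrix $A[\varphi]$ collapses to $\tfrac{1}{2}(c - c^{-1})I$, and positivity forces $c>1$; then $\det A[\varphi] = \bigl(\tfrac{1}{2}(c-c^{-1})\bigr)^n$, so \eqref{MA-c} becomes exactly \eqref{con}. To get unique solvability of \eqref{con}, define
\[
g(c) = c^{-p}\Bigl(\tfrac{1}{2}(c-c^{-1})\Bigr)^{n}, \qquad c\in(1,\infty).
\]
A logarithmic differentiation yields
\[
\frac{g'(c)}{g(c)} = \frac{1}{c}\left(-p + n\,\frac{c^{2}+1}{c^{2}-1}\right),
\]
and since $\frac{c^{2}+1}{c^{2}-1} > 1$ on $(1,\infty)$ while $p < n$, the bracket is strictly positive. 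Thus $g$ is strictly increasing, with $g(1^+)=0$ and $g(+\infty)=+\infty$, giving a unique $c>1$ with $g(c)=\gamma$ for every $\gamma>0$.

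For (ii), I would simply invoke Theorem 8.1~(6)(7) of \cite{Li-Xu}, which asserts that for $-n\le p<n$ the only even solution of \eqref{MA-c} is the constant one. A few words are in order to justify why something beyond bare calculus is needed: an evaluation of \eqref{MA-c} at the max point $x_1$ and the min point $x_0$ of $\varphi$, using $D\varphi(x_i)=0$ and $D^2\varphi(x_1)\le 0 \le D^2\varphi(x_0)$, only yields $g(\min\varphi)\le\gamma\le g(\max\varphi)$, which sandwiches the constant $c$ between the extrema but does not force $\min\varphi=\max\varphi$.

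The main obstacle is exactly this last step: closing the gap between the two-sided pinching and genuine uniqueness. As observed in the paper, this is precisely where the evenness hypothesis and the range $-n\le p<n$ are essential, and the proof in \cite{Li-Xu} uses the finer structure of the horospherical $p$-surface area measure (a Brunn--Minkowski / integral-identity argument tailored to $h$-convex domains). Since I can freely quote that result, the present lemma follows by combining it with the elementary monotonicity of $g$ established above.
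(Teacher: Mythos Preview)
Your proposal is correct and follows essentially the same approach as the paper: both reduce the lemma to the cited Theorem~8.1~(6)(7) of \cite{Li-Xu}, with your version adding the explicit monotonicity computation for $g$. The only point the paper handles more carefully is the borderline case $p=-n$: there Theorem~8.1~(7) does not directly assert that the unique even solution is constant, but rather classifies \emph{all} solutions as $\varphi(x)=(1+2\gamma^{1/n})^{1/2}\bigl(\sqrt{|x_0|^2+1}-\langle x_0,x\rangle\bigr)$ for $x_0\in\mathbb{R}^{n+1}$, and one must then observe that evenness forces $x_0=0$; you should make this small verification explicit rather than folding it into the citation.
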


\begin{proof}
For $-n<p<n$,
Theorem 8.1 (6) in \cite{Li-Xu} tells us that there exist a unique solution $\varphi=c$
to the equation \eqref{MA-c} for any $\gamma>0$, where $c$ is the unique positive solution to the equation \eqref{con}.

For $p=-n$, Theorem 8.1 (7) in \cite{Li-Xu} tells us that the solutions to
the equation \eqref{MA-c} for any $\gamma>0$ are given by
\begin{eqnarray*}
\varphi(x)=\Big(1+2\gamma^{\frac{1}{n}}\Big)^{\frac{1}{2}}\Big(\sqrt{|x_0|^2+1}-\langle x_0, x\rangle\Big),
\end{eqnarray*}
where $x_0 \in \mathbb{R}^{n+1}$. Clearly, $\varphi(x)=\Big(1+2\gamma^{\frac{1}{n}}\Big)^{\frac{1}{2}}$ is the unique even solution.
Thus, the conclusion holds true.
\end{proof}

Now, we begin to prove Theorem \ref{Main}.
After establishing the  a priori estimates \eqref{c0}, \eqref{c1} and \eqref{c2} for $p< n$, we know from the equation \eqref{MA}
\begin{eqnarray}\label{C2+++}
\lambda_i(U(x))\geq C>0, \quad \forall \ x \in \mathbb{S}^n,
\end{eqnarray}
where $\lambda_1(U), ..., \lambda_n(U)$ are eigenvalues of the matrix $U$. Thus,
the equation \eqref{MA} is uniformly elliptic. From Evans-Krylov estimates \cite{Eva82, Kry83}, and Schauder estimates \cite{GT}, we have
\begin{eqnarray}\label{C2+}
|\varphi|_{C^{4,\alpha}(\mathbb{S}^n)}\leq C
\end{eqnarray}
for any smooth, even and uniformly $h$-convex solution $\varphi$ to the equation \eqref{MA}.
We define
\begin{eqnarray*}
\mathcal{B}^{2,\alpha}(\mathbb{S}^n)=\{\varphi \in
C^{2,\alpha}(\mathbb{S}^n): \varphi \ \mbox{is even}\}
\end{eqnarray*}
and
\begin{eqnarray*}
\mathcal{B}_{0}^{4,\alpha}(\mathbb{S}^n)=\{\varphi \in
C^{4,\alpha}(\mathbb{S}^n): A[\varphi]>0 \ \mbox{and} \ \varphi \ \mbox{is even}\}.
\end{eqnarray*}
Let us consider a family of the mappings for $0\leq t\leq 1$
\begin{eqnarray*}
\mathcal{L}(\cdot, t): \mathcal{B}_{0}^{4,\alpha}(\mathbb{S}^n)\rightarrow
\mathcal{B}^{2,\alpha}(\mathbb{S}^n),
\end{eqnarray*}
which is defined by
\begin{eqnarray*}
\mathcal{L}(\varphi, t)=\det U-\varphi^p [(1-t)\gamma+t f],
\end{eqnarray*}
where and the constant $\gamma$ will be chosen later and $U$ is denoted as before
\begin{eqnarray*}
U=D^2\varphi-\frac{1}{2}\frac{|D\varphi|^2}{\varphi}I+\frac{1}{2}\Big(\varphi-\frac{1}{\varphi}\Big)I.
\end{eqnarray*}

Let $$\mathcal{O}_R=\{\varphi \in \mathcal{B}_{0}^{4,\alpha}(\mathbb{S}^n):
1+\frac{1}{R}< \varphi, \ \frac{1}{R} I< U, \ |\varphi|_{C^{4,\alpha}(\mathbb{S}^n)}<R\},$$ which clearly is an open
set of $\mathcal{B}_{0}^{4,\alpha}(\mathbb{S}^n)$. Moreover, if $R$ is
sufficiently large, $\mathcal{L}(\varphi, t)=0$ has no solution on $\partial
\mathcal{O}_R$ by the a priori estimates established in \eqref{c0}, \eqref{C2+++} and \eqref{C2+}.
Therefore the degree $\deg(\mathcal{L}(\cdot, t), \mathcal{O}_R, 0)$ is
well-defined for $0\leq t\leq 1$. Using the homotopic invariance of
the degree (Proposition 2.2 in \cite{Li89}), we have
\begin{eqnarray}\label{hot}
\deg(\mathcal{L}(\cdot, 1), \mathcal{O}_R, 0)=\deg(\mathcal{L}(\cdot, 0), \mathcal{O}_R, 0).
\end{eqnarray}

For $-n\leq p<n$, Lemma \ref{U-C} tells us that
$\varphi=c$ is the unique even solution for $\mathcal{L}(\varphi, 0)=0$ in $\mathcal{O}_R$.
Direct calculation show that the linearized operator $L_c$ of $\mathcal{L}$ at
$\varphi=c$ is
\begin{eqnarray*}
\Big[\frac{1}{2}\Big(c-\frac{1}{c}\Big)\Big]^{1-n}L_{c}(\psi)=
\Big(\Delta_{\mathbb{S}^n}+\frac{1}{2}\Big(n-p+\frac{n+p}{c^2}\Big)\Big)\psi,
\end{eqnarray*}
where $\gamma$ is given by the equation \eqref{con}.
Since spherical Laplacian has a discrete spectrum, we choose $c=c_0$
such that $L_{c_0}$ is an invertible operator for $p\neq -n$. For $p=-n$,
$\Big[\frac{1}{2}\Big(c-\frac{1}{c}\Big)\Big]^{1-n}L_{-n}(\psi)=\Delta_{\mathbb{S}^n}\psi+n\psi=0$
has the unique even solution $\psi=0$.
Thus, $L_{c_0}$ is an invertible operator for $p$. So, we have by Proposition
2.3 in \cite{Li89}
\begin{eqnarray*}
\deg(\mathcal{L}(\cdot, 0), \mathcal{O}_R, 0)=\deg(L_{c_0}, \mathcal{O}_R, 0).
\end{eqnarray*}

Because
the eigenvalues of the Beltrami-Laplace operator $\Delta$ on $\mathbb{S}^n$ are strictly less than
$-n$ except for the first two eigenvalues $0$ and $-n$,
there is only one positive eigenvalue $\frac{1}{2}\Big(n-p+\frac{n+p}{c^2}\Big)$ of $\Big[\frac{1}{2}\Big(c-\frac{1}{c}\Big)\Big]^{1-n}L_{c}$
with multiplicity $1$ if we note that $\frac{1}{2}\Big(n-p+\frac{n+p}{c^2}\Big)-n\leq0$ for $p\geq-n$.
Then we have by Proposition
2.4 in \cite{Li89}
\begin{eqnarray*}
\deg(\mathcal{L}(\cdot, 0), \mathcal{O}_R, 0)=\deg(L_{c_0}, \mathcal{O}_R, 0)=-
1.
\end{eqnarray*}
Therefore, it follows from \eqref{hot}
\begin{eqnarray*}
\deg(\mathcal{L}(\cdot, 1), \mathcal{O}_R; 0)=\deg(\mathcal{L}(\cdot, 0), \mathcal{O}_R, 0)=-
1.
\end{eqnarray*}
So, we obtain a solution at $t=1$. This completes the proof of
Theorem \ref{Main}.

\textbf{Acknowledgement:} The author would
like to express his gratitude to Dr. Botong Xu for
pointing some mistakes on this paper.

\bigskip

\bigskip

\end{document}